\definecolor{red}{rgb}{1,0,0}
\definecolor{blue}{rgb}{0,0,1}
\definecolor{green}{rgb}{0,.6,0}
\tikzstyle{vertex} = [fill,shape=circle,node distance=80pt]
\tikzstyle{edge} = [fill,opacity=.5,fill opacity=.5,line cap=round, line join=round, line width=50pt]
\tikzstyle{elabel} =  [fill,shape=circle,node distance=30pt]
\newtheorem{thm}{Theorem}[section]
\newtheorem{cor}[thm]{Corollary}
\newtheorem{lem}[thm]{Lemma}
\newtheorem{prop}[thm]{Proposition}
\newtheorem{conj}[thm]{Conjecture}
\newtheorem{quest}[thm]{Question}
\theoremstyle{definition}
\theoremstyle{definition}
\theoremstyle{definition}
\theoremstyle{definition}
\theoremstyle{definition}
\newcommand{\bit}{\begin{itemize}}
\newcommand{\eit}{\end{itemize}}
\newcommand{\ben}{\begin{enumerate}}
\newcommand{\een}{\end{enumerate}}
\newcommand{\beq}{\begin{equation}}
\newcommand{\eeq}{\end{equation}}
\newcommand{\bea}{\begin{eqnarray}} 
\newcommand{\eea}{\end{eqnarray}}
\newcommand{\bpf}{\begin{proof}}
\newcommand{\epf}{\end{proof}\ms}
\newcommand{\bmt}{\begin{bmatrix}}
\newcommand{\emt}{\end{bmatrix}}
\newcommand{\ms}{\medskip}
\newcommand{\lc}{\left\lceil}
\newcommand{\rc}{\right\rceil}
\newcommand{\noi}{\noindent}
\newcommand{\ceil}[1]{\lc #1 \rc}
\newcommand{\beqs}{\begin{equation*}} 
\newcommand{\eeqs}{\end{equation*}}
\newcommand{\beas}{\begin{eqnarray*}}
\newcommand{\eeas}{\end{eqnarray*}}
\newcommand{\floor}[1]{\lfloor #1 \rfloor}
\title{An upper bound for the $k$-power domination number in $r$-uniform hypergraphs}
\author{Joseph S. Alameda \thanks{Dept.~of Mathematics, United States Naval Academy, Annapolis, MD, USA (alameda@usna.edu)} \and
Franklin Kenter \thanks{Dept.~of Mathematics, United States Naval Academy, Annapolis, MD, USA (kenter@usna.edu)} \and 
Karen Meagher \thanks{Dept.~of Mathematics, University of Regina, Regina, SK, CA (karen.meagher@uregina.ca)} \and 
Michael Young \thanks{Dept.~of Mathematics, Carnegie Mellon University, Pittsburgh, PA, USA (michaely@andrew.cmu.edu)}
}
\begin{document}

\maketitle

\begin{abstract}
Generalizing work on graphs, Chang and Roussel introduced $k$-power domination in hypergraphs and conjectured the upper bound for the $k$-power domination number for $r$-uniform hypergraphs on $n$ vertices was $\frac{n}{r+k}$. This upper bound was shown to be true for simple graphs ($r=2$) and it was further conjectured that only a family of hypergraphs, known as the squid hypergraphs, attained this upper bound. In this paper, the conjecture is proven to hold for hypergraphs with $r=3$ or $4$; but is shown to be false, by a counterexample, for $r\geq 7$. Furthermore, we show that the squid hypergraphs are not the only hypergraphs that attain the original upper bound. Finally, a new upper bound is proven for $r\geq 3$.
\end{abstract}

\noi {\bf Keywords} zero forcing, power domination, hypergraphs

\noi{\bf AMS subject classification} 05C57, 05C15, 05C50, 05C65

\section{Introduction}
Power domination was introduced by Haynes, Hedetniemi, Hedetniemi and Henning in \cite{haynes} to study the monitoring process of electrical power networks by placing as few measurement devices as possible. This was done by defining the power domination problem in graph theoretic terms. In particular, Phase Measurement Units (PMUs) were placed at a set of initial vertices and then certain rules were applied. These rules consisted of a domination step followed by the zero forcing process.

Zero forcing was first introduced in \cite{AIM} as a way to find upper bounds for the maximum nullity of real symmetric matrices whose nonzero off-diagonal entries are described by a graph. It was later discovered to be one of the processes in power domination. Zero forcing has since been generalized, and specifically in hypergraphs, there have been multiple generalizations. In this paper, the generalization introduced in \cite{chang2} as a process in $k$-power domination in hypergraphs will be used. Moreover, this generalization also is consistent with the definition of $k$-power domination in simple graphs which was introduced by Chang, Dorbec, Montassier, and Raspaud in \cite{chang1}.

\section{Preliminaries}\label{prelim}
A \textit{hypergraph}, $\mathcal{H}= (V(\mathcal{H}),E(\mathcal{H}))$, is a set of vertices $V(\mathcal{H})$ combined with a set of edges $E(\mathcal{H})$ such that $E(\mathcal{H})$ is a subset of the power set of $V(\mathcal{H})$. When it is obvious which hypergraph is being used, $V$ and $E$ will be written. A hypergraph $\mathcal{H}$ is said to be \textit{$r$-uniform} when each edge in $E$ has order $k$. The \textit{closed neighborhood} of a vertex $v \in V$, $N[v]$, is the set of vertices adjacent to $v$ and $v$ itself. The \textit{open neighborhood} of a vertex $v \in V$ is the set $N(v) = N[v]\setminus\{v\}$. The closed (or open) neighborhood of a set $S$ is the set $\bigcup_{v\in S}N[v]$ (or $\bigcup_{v\in S}N(v)$) and is denoted $N[S]$ (or $N(S)$).

 A set of vertices $D$ is known as a \textit{dominating set} of a hypergraph $\mathcal{H}$ if $\bigcup_{v\in D}N[v] = V$. The size of a minimum dominating set in a hypergraph $\mathcal{H}$ is denoted $\gamma(\mathcal{H})$ and is called the \textit{domination number}. In \cite{chang2}, Chang and Roussel defined a \textit{$k$-power dominating set} on a hypergraph $\mathcal{H}$, as a set $D \subseteq V$ which colors vertices in $V$ blue with respect to the following rules: 
 
 \begin{enumerate}
     \item (Domination step) The vertices in $N[D]$ are colored blue.

     \item (Forcing step) Given a blue vertex $v$, if there is a set of at most $k$ edges each incident with $v$ that contains all of $v$’s white neighbors, then these neighbors turn blue.
     
     \item If iteratively applying these rules results in all vertices in $\mathcal{H}$ becoming blue, then $D$ is a {\it $k$-power dominating set} of $\mathcal{H.}$

 \end{enumerate}

 The {\it $k$-power dominating number} of $\mathcal{H}$, denoted $\gamma_p^k(\mathcal{H})$, is the minimum cardinality of a $k$-power dominating set of $\mathcal{H}.$ Notice the following inequality observed by Chang et al. in \cite{chang1} still applies to $k$-power domination for hypergraphs. Given a hypergraph $\mathcal{H}$,
$$\gamma(\mathcal{H}) = \gamma_p^0(\mathcal{H}) \geq \gamma_p^1(\mathcal{H}) \geq \dots \geq \gamma_p^k(\mathcal{H}) \geq \gamma_p^{k+1}(\mathcal{H}) \geq \dots.$$

Define the {\it white degree} of a vertex $v$ with respect to a set $S$ in $\mathcal{H}$, denoted $deg_w(v,S)$, to be the minimum number of edges that cover $v$ and the white neighbors of $v$ when each vertex in $S$ is blue and each vertex in $V(\mathcal{H})-S$ is white. Denote the {\it maximum white degree} by $\Delta_w(S)$. With the definition of white degree, the color change rule for $k$-forcing in hypergraphs can be redefined: if $v$ is a vertex in a set of blue vertices $S$ of a hypergraph $\mathcal{H}$ with $deg_w(v,S)\leq k$, then change the color of the neighbors of $v$ to blue. If applying this rule results in all vertices in $\mathcal{H}$ being colored blue, then $S$ is a $k$-forcing set of $\mathcal{H}$. Therefore the definition of a $k$-power dominating set in \cite{chang2}, is equivalent to a set $D$ such that once every vertex in $D$ and its neighborhood are colored blue, $N[D]$ is a $k$-forcing set.

Although $k$-power domination on simple graphs has been, and continues to be, a well studied area in graph theory, little is known about upper bounds for the $k$-power domination number of  hypergraphs. In \cite{chang2}, an upper bound for $r$-uniform hypergraphs was conjectured and it was believed the squid hypergraphs were the only hypergraphs attaining this bound. A {\it squid hypergraph} can be defined from any $r$-uniform hypergraph as follows. For every vertex $v$, add $(k+1)+(r-2)$ vertices $v_1,v_2,\dots,v_{k+1},v_1',v_2',\dots,v_{r-2}'$ and $k+1$ edges $e_{v,i}=\{v,v_1',v_2',\dots,v_{r-2}',v_i\}$ for $1\leq i \leq k+1$. See Figure \ref{squid} for an example.

\begin{figure}[H]
\centering
\includegraphics[width=0.8\textwidth]{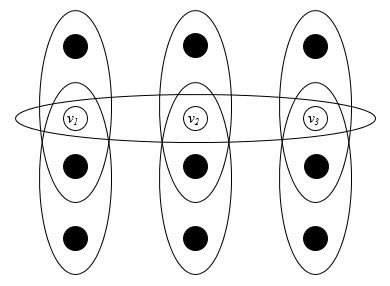}
\caption{A $3$-uniform squid hypergraph ($k=2$) constructed from the $3$-uniform hypergraph with a single edge: $\{v_1, v_2, v_3\}$.}
\label{squid}
\end{figure}

\begin{lem}[\cite{chang2}]\label{sizeofsquids}
Let $\mathcal{H}$ be a $r$-uniform hypergraph and $\mathcal{H}'$ be the squid hypergraph of $\mathcal{H}$. Then $\gamma_p^k(\mathcal{H}') = |\mathcal{H}|$.
\end{lem}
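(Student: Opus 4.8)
Write $n=|\mathcal{H}|$ for the number of vertices of $\mathcal{H}$, so that $\mathcal{H}'$ has exactly $(r+k)n$ vertices and the lemma asserts that the squid $\mathcal{H}'$ meets the conjectured bound $\frac{n}{r+k}$ with equality. The plan is to prove the two inequalities $\gamma_p^k(\mathcal{H}')\le n$ and $\gamma_p^k(\mathcal{H}')\ge n$ separately; the first is immediate and the second carries the content.

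For the upper bound, take $D=V(\mathcal{H})$ viewed as a subset of $V(\mathcal{H}')$. For each $v\in V(\mathcal{H})$ and each $i$, the edge $e_{v,i}=\{v,v_1',\dots,v_{r-2}',v_i\}$ contains $v$, so $v_i$ and every $v_j'$ belong to $N[v]\subseteq N[D]$; hence $N[D]=V(\mathcal{H}')$, which is trivially a $k$-forcing set, so $D$ is a $k$-power dominating set and $\gamma_p^k(\mathcal{H}')\le n$.

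For the lower bound, for each $v\in V(\mathcal{H})$ let $T_v=\{v_1,\dots,v_{k+1},v_1',\dots,v_{r-2}'\}$ be the new vertices of the gadget attached to $v$; the sets $\{v\}\cup T_v$ are pairwise disjoint as $v$ ranges over $V(\mathcal{H})$. The key claim is that every $k$-power dominating set $D$ of $\mathcal{H}'$ satisfies $D\cap(\{v\}\cup T_v)\neq\emptyset$ for every $v$, which immediately yields $|D|\ge n$ and hence $\gamma_p^k(\mathcal{H}')\ge n$. To prove the claim, suppose $D\cap(\{v\}\cup T_v)=\emptyset$ for some $v$. Every neighbor of a vertex of $T_v$ lies in $\{v\}\cup T_v$: indeed $v_i$ is adjacent only to $v,v_1',\dots,v_{r-2}'$, and each $v_j'$ is adjacent only to $v$ and to the remaining gadget vertices. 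Hence $N[D]$ contains no vertex of $T_v$, so every vertex of $T_v$ is white when the $k$-forcing process begins. Now consider the first step at which some $x\in T_v$ turns blue; it does so because some already-blue vertex $u$ forces and $x$ is a neighbor of $u$, so $u\in\{v\}\cup T_v$. Since $T_v$ is still entirely white just before this step, we must have $u=v$. But then $v_1,\dots,v_{k+1}$ are white neighbors of $v$, and each $v_i$ lies in the unique edge $e_{v,i}$ of $\mathcal{H}'$; these $k+1$ edges are distinct and all contain $v$, so any family of edges through $v$ that covers the white neighborhood of $v$ must contain all $k+1$ of them. Therefore $deg_w(v,S)\ge k+1>k$, so $v$ cannot force---a contradiction. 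Hence no vertex of $T_v$ can ever become blue, contradicting that $D$ is a $k$-power dominating set, and the claim follows.

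The step I expect to require the most care is this last one: verifying that the first force reaching the gadget of $v$ must be performed by $v$ itself---because $v$ is the only vertex outside $T_v$ with a neighbor in $T_v$, and no vertex of $T_v$ is yet blue---and that at exactly that moment all $k+1$ leaf-vertices $v_1,\dots,v_{k+1}$ are still white, so the ``$k+1$ private edges'' bound $deg_w(v,S)\ge k+1$ applies. Everything about $\mathcal{H}$ outside the gadgets is irrelevant, since the only link between a gadget and the rest of $\mathcal{H}'$ runs through the single vertex $v$, which on its own can never initiate a force into its own gadget.
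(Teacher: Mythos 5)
Your proof is correct. The paper never proves Lemma~\ref{sizeofsquids} explicitly---it is the special case $x=(1,\dots,1)$ of the later Proposition on $(d,k,r,x)$-squids---and your argument is essentially that proposition's proof: domination by the original vertices gives $\gamma_p^k(\mathcal{H}')\le |\mathcal{H}|$, and for the lower bound each gadget must meet $D$ because the $k+1$ pendant edges $e_{v,1},\dots,e_{v,k+1}$ force $deg_w(v,S)\ge k+1$, so $v$ can never force into its own gadget; your write-up just makes the ``first force entering the gadget'' step more explicit than the paper does.
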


\begin{conj}[\cite{chang2}]\label{conj}
If $\mathcal{H}$ is a connected $r$-uniform hypergraph and $k+r\leq n$, then $$\gamma_p^k(\mathcal{H}) \leq \frac{n}{r+k} $$ with equality if and only if $\mathcal{H}$ is a squid hypergraph of a connected $r$-uniform hypergraph or $r=2$ with $\mathcal{H}=K_{k+2,k+2}.$
\end{conj}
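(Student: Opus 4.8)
The plan is to split the statement into its two halves and treat them separately. The ``if'' direction of the equality characterization is already in hand: by Lemma~\ref{sizeofsquids} the squid hypergraph $\mathcal{H}'$ of a connected $r$-uniform hypergraph on $n_0$ vertices satisfies $\gamma_p^k(\mathcal{H}') = n_0$, and since $\mathcal{H}'$ has $n = n_0\bigl(1 + (k+1) + (r-2)\bigr) = n_0(r+k)$ vertices we get $\gamma_p^k(\mathcal{H}') = \frac{n}{r+k}$ exactly; the graph case $\mathcal{H} = K_{k+2,k+2}$ is a direct computation. So the real work is (A) the inequality $\gamma_p^k(\mathcal{H}) \le \frac{n}{r+k}$ for every connected $r$-uniform $\mathcal{H}$, and (B) the ``only if'' direction, that equality forces $\mathcal{H}$ to be a squid. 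Note that the trivial bound $\gamma_p^k(\mathcal{H}) \le \gamma(\mathcal{H})$ (any dominating set colours all of $V$ blue in the first step) is far too weak, since it never sees the parameter $k$; the forcing process must be exploited to gain the factor $r+k$ rather than merely $r$.

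For (A) I would argue by a constructive charging process that builds a power dominating set $D$ while running the colouring game, and then show that each vertex placed in $D$ can be charged to a pairwise-disjoint block of at least $r+k$ newly blue vertices, giving $(r+k)|D| \le n$. Concretely, I would interleave two moves: whenever some blue vertex has white degree at most $k$, apply the forcing rule free of charge; when the process stalls, add a white vertex $v$ adjacent to the current blue set to $D$ and colour $N[v]$. The accounting should reflect the decomposition $r+k = 1 + (r-2) + (k+1)$ visible in the squid: the seed $v$ itself, the ``internal'' vertices shared by the edges through $v$, and the $k+1$ edges whose completion is what prevents cheaper forcing. The subtle point is that a seed may dominate few fresh vertices yet trigger a long forcing cascade, or vice versa, so the $r+k$ must be obtained in an amortized sense across each seed-plus-cascade; I would formalise this either by an induction on $n$ that deletes a reducible block of $r+k$ vertices, or by a direct token argument. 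Either way the recurring technical nuisance is that forcing in a hypergraph lets one vertex clear several edges at once, and that deleting vertices can destroy $r$-uniformity, so the reduction must be phrased as a contraction preserving both uniformity and connectivity.

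For (B) I would run the argument of (A) under the assumption of equality and extract rigidity: equality means every charging inequality is tight, so each seed must own \emph{exactly} $r+k$ vertices and the forcing process must contribute nothing beyond the unavoidable minimum. Tracing which local configurations are compatible with simultaneous tightness should force, around each seed, precisely the squid pattern---$r-2$ shared internal vertices together with $k+1$ private pendant edges each meeting the blue set in a single new vertex, with white degree exactly $k+1$ before seeding so that no cheaper force was available. Assembling these local pictures and verifying that the global hypergraph is then literally the squid of a connected $r$-uniform hypergraph is the delicate step, and I expect a short case analysis on the overlap patterns of the edges at a seed to close it when $r \in \{3,4\}$.

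The main obstacle is this extremal analysis underlying (B) together with the tightness of (A). For small $r$ the $r-2$ internal vertices leave essentially no freedom, so tightness pins down the squid; but as $r$ grows there is increasing slack in how the $r$ vertices of each edge can be split between ``shared'' and ``private'' roles and in how the edges through a seed may overlap. This slack is exactly what must be shown to collapse to the squid, and it is where I would expect trouble for large $r$: either a non-squid hypergraph attains the bound with equality, or the bound $\frac{n}{r+k}$ itself is exceeded and must be replaced by a weaker one. This matches both the counterexample promised for $r \ge 7$ and the ``new upper bound'' for $r \ge 3$, and so I would expect the statement as written to survive only for $r \le 4$, with the charging argument of (A) requiring an $r$-dependent adjustment in general.
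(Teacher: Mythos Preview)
The statement is a \emph{conjecture}, and the paper does not prove it --- it refutes it. Section~3 exhibits, for every $r \ge 7$ (and suitable $k$), a connected $r$-uniform hypergraph on $n = 9 + 6k + 2\ell$ vertices with $r = 5 + 2k + \ell$ for which $\gamma_p^k(\mathcal{H}) \ge 2 > \frac{n}{r+k}$. Hence your part~(A) is false in general, and no charging scheme assigning $r+k$ pairwise-disjoint vertices to each seed can succeed: the total $n$ is provably too small. The equality characterisation~(B) is also refuted: the generalised $(d,k,r,x)$-squids of Section~3 attain $\gamma_p^k = \frac{n}{r+k}$ without being squid hypergraphs in the original sense, so the ``only if'' direction fails even in the range where the inequality holds. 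You correctly anticipate both obstructions in your final paragraph, but that means the proposal should not be framed as a proof of the conjecture --- there is nothing to compare against, because the paper contains no proof of this statement.

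For the portion the paper \emph{does} establish --- the inequality when $r \in \{3,4\}$ --- the method is quite different from your direct charging idea. The paper first shows (Lemmas~\ref{first} and~\ref{upper}) that a minimum $k$-power dominating set $D$ can be chosen with $|epn(v,D)| \ge k+1$ for every $v \in D$; it then deletes $k$ private neighbours per vertex of $D$, repairs the result back to an $r$-uniform hypergraph $\mathcal{H}'$ without isolated vertices, and applies the Bujt\'as--Henning--Tuza domination bound of Theorem~\ref{BHT1} to $\mathcal{H}'$. The outcome is Theorem~\ref{winner}, $\gamma_p^k(\mathcal{H}) \le \frac{n + \lfloor (r-3)/2\rfloor m}{\lfloor 3(r-1)/2\rfloor + k}$, which specialises to $\frac{n}{r+k}$ only for $r \le 4$. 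So the factor $r+k$ is not obtained by an amortised $1 + (r-2) + (k+1)$ accounting along forcing cascades as you sketch, but by stripping off $k|D|$ vertices and invoking an existing domination bound on what remains.
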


In Section \ref{three}, it is shown that the first part of the Conjecture \ref{conj} holds for hypergraphs with $k+r \leq n$ vertices and $r\leq 4$. It is also shown that for $r\geq 3,$ $\gamma_p^k(\mathcal{H}) \leq \frac{n + \floor{\frac{r-3}{2}}m}{\floor{\frac{3(r-1)}{2}}+k}$. In Section \ref{four} a counterexample to Conjecture \ref{conj} is given for $k<r$ and $r\geq 7$. 

\section{An upper bound for $r$-uniform hypergraphs}\label{three}

In \cite{chang1}, Chang and Roussel proved that if $G$ is a connected graph of order $n\geq 2,$ then $\gamma_p^k(G) \leq \frac{n}{2+k}.$ Since simple graphs are $2$-uniform hypergraphs, Conjecture \ref{conj} holds for $r=2$. In this section, the upper bound in the conjecture is proven for $r\leq 4,$ and a new upper bound is proven for $r\geq 3.$

\begin{thm}[\cite{chang1}]
If $\mathcal{H}$ is a connected $2$-uniform hypergraph (simple graph) with $k+2\leq n$ vertices, then $$\gamma_p^k(\mathcal{H}) \leq \frac{n}{2+k}.$$
\end{thm}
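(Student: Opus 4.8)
The statement is the base case $r=2$ of Conjecture~\ref{conj}; writing $G:=\mathcal H$, the claim is that $\gamma_p^k(G)\le\frac{n}{k+2}$ for every connected graph $G$ on $n\ge k+2$ vertices (for $r=2$ the hypergraph process is exactly ordinary $k$-power domination). I would prove this by strong induction on $n$.

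For the base case $n=k+2$ I would show $\gamma_p^k(G)\le 1$. If some vertex $v$ has degree $k+1$, then $N[v]=V(G)$ and $\{v\}$ is a $k$-power dominating set. Otherwise every vertex has degree at most $k$, hence white degree at most $k$ at every stage of the process, so after $N[v]$ has been coloured blue for an arbitrary vertex $v$ the forcing rule propagates along the edges of the connected graph $G$ and colours all of $V(G)$; thus again $\gamma_p^k(G)\le 1=\frac{k+2}{k+2}$.

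For the inductive step ($n>k+2$) the idea is to peel off a ``handle'' $A\subseteq V(G)$ with $|A|\ge k+2$ that a single new vertex can observe, and to recurse on $G-A$. Fix a spanning tree $T$ of $G$, root it, let $u$ be a non-leaf of $T$ of maximum depth — so every child of $u$ in $T$ is a leaf of $T$ — write $C$ for the set of children of $u$, and set $A=\{u\}\cup C$ (when $A$ is too small one enlarges it by walking up $T$, see below). In the favourable situation where $|A|\ge k+2$ and every component $K$ of $G-A$ has order at least $k+2$, apply the induction hypothesis to each $K$ to get a $k$-power dominating set $D_K$ with $|D_K|\le|K|/(k+2)$, and put $D=\{u\}\cup\bigcup_K D_K$. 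Then $|D|\le 1+\frac{n-|A|}{k+2}\le\frac{n}{k+2}$. Moreover $D$ is a $k$-power dominating set of $G$: colouring $N[D]$ blue already colours all of $A$ (since $A\subseteq N[u]$), and because $G-A$ has no edges between distinct components, a vertex of a component $K$ has, outside $K$, only neighbours in the already-blue set $A$; hence a vertex's white degree in $G$ coincides with its white degree in $K$ throughout the process, every force that is legal inside $K$ remains legal in $G$, and processing the components one at a time colours all of $G$. (If $G-A$ is empty then $D=\{u\}$ already works, since $N[u]=V(G)$ and $n\ge k+2$.)

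The main obstacle, and the reason the original proof breaks into several cases, is precisely the failure of this favourable situation: (i) $|\{u\}\cup C|<k+2$, for instance when $u$ has few children and little structure hangs below its ancestors — then one must climb the tree, absorbing the parent $w$ of $u$, the other subtrees at $w$, and if necessary $w$'s parent, until at least $k+2$ vertices have been collected, all the while keeping the collected set observable by forcing \emph{downward} through it (which is where the maximum-depth choice of $u$ gets used); and (ii) deleting $A$ may disconnect $G$ or leave a component of order at most $k+1$, which cannot be charged one vertex without violating the ratio $\frac{1}{k+2}$. The standard fixes for (ii) are to re-attach a small leftover component into $A$, enlarging the handle, or to strengthen the induction hypothesis to a statement about graphs equipped with an already-observed ``root'', so that a small piece hanging off $A$ costs nothing. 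Establishing that equality forces $G=K_{k+2,k+2}$ (the $r=2$ instance of the squid characterization) is a separate, more delicate analysis that is not needed for the inequality.
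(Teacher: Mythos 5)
There is a genuine gap: your induction never actually gets through the cases that carry all the difficulty. In the ``favourable situation'' the argument is fine (the bound $1+\frac{n-|A|}{k+2}\le\frac{n}{k+2}$ does need $|A|\ge k+2$, which you note, and forces inside a component of $G-A$ do remain legal in $G$), but cases (i) and (ii) are only described, not resolved. In (i), once you climb from $u$ to its parent $w$ and absorb $w$'s other subtrees, the collected set $A$ is no longer contained in $N[u]$ for any single vertex, so you must argue that one vertex can observe $A$ by domination plus forcing; that requires the white degree of the forcing vertices to stay at most $k$ at each step, which can fail when $w$ has many pendent children or several subtrees of size $2$, and this is exactly where such proofs either add cases or change the induction statement. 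In (ii), a component of $G-A$ of order at most $k+1$ cannot be handled by your induction hypothesis at all (it requires order at least $k+2$), and the proposed fix of re-attaching it to $A$ destroys the property that $A$ is observed from a single vertex, while ``strengthen the induction hypothesis to graphs with an already-observed root'' is a different theorem that would itself need a full proof. As written, the proposal is a plausible plan with the crux deferred to ``standard fixes,'' so it does not constitute a proof.

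For comparison, the paper does not prove this statement at all: it is quoted from \cite{chang1}, and the only ingredient recorded is the lemma that, when the maximum degree is at least $k+2$, some minimum $k$-power dominating set uses only vertices of degree at least $k+2$. The known argument (which the present paper generalizes to hypergraphs via Lemma \ref{first} and Lemma \ref{upper}) is a global counting argument, not an induction: choose a minimum $k$-power dominating set $D$ so that every $v\in D$ satisfies $k+1\le deg_w(v,N[D\setminus\{v\}])$, deduce that each $v\in D$ has at least $k+1$ external private neighbors, and observe that the sets $\{v\}\cup epn(v,D)$ for $v\in D$ are pairwise disjoint, giving $(k+2)|D|\le n$ directly (the case of maximum degree at most $k+1$, where a single vertex suffices, is handled separately, much as in your base case). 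If you want a self-contained proof to include, that private-neighbor counting route is both shorter and avoids the component bookkeeping entirely.
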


The above theorem was proven using the following lemma from \cite{chang1}.

\begin{lem}[\cite{chang1}]
If $G$ is a connected graph with maximum degree at least $k+2$, then there exists a $k$-power dominating set containing only vertices of degree at least $k+2$.
\end{lem}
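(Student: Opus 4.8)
The plan is to start from an arbitrary $k$-power dominating set and repeatedly push each ``low-degree'' member (degree at most $k+1$) one step closer to the set of high-degree vertices, until no low-degree vertices remain. Concretely, let $H=\{v\in V(G):\deg(v)\ge k+2\}$; this is nonempty because $\Delta(G)\ge k+2$, and since $G$ is connected with more than one vertex (as $\Delta(G)\ge k+2\ge 3$), every vertex has at least one neighbor. For $v\in V(G)$ write $\dist(v,H)=\min_{h\in H}\dist(v,h)$. Among all $k$-power dominating sets of $G$ (a finite, nonempty family), choose $D$ minimizing $\sum_{v\in D}\dist(v,H)$. I claim $D\subseteq H$, which is exactly the statement of the lemma.

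Suppose not, and fix $v\in D$ with $\deg(v)\le k+1$, so that $v\notin H$ and $\dist(v,H)\ge 1$. Let $w$ be the neighbor of $v$ on a shortest $v$--$H$ path, so $\dist(w,H)=\dist(v,H)-1$, and set $D'=(D\setminus\{v\})\cup\{w\}$ if $w\notin D$, and $D'=D\setminus\{v\}$ if $w\in D$. The key step is to verify that $D'$ is again a $k$-power dominating set. In either case $w\in D'$, so in the initial coloring induced by $D'$ the vertex $w$ is blue and hence $v\in N[w]$ is blue as well; moreover $v$ has the blue neighbor $w$, so $deg_w(v,N[D'])\le\deg(v)-1\le k$, and therefore $v$ can force, turning all of $N[v]$ blue. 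Consequently the set of blue vertices reachable from $D'$ contains $N[D']\cup N[v]\supseteq N[D\setminus\{v\}]\cup N[v]=N[D]$, which is precisely the starting blue set of the process for $D$. Since the $k$-forcing closure is monotone under inclusion of the initial blue set --- a larger blue set only decreases white degrees and hence only enables more forces --- and $N[D]$ has full closure, so does the blue set reachable from $D'$; thus $D'$ is a $k$-power dominating set.

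It then remains to contradict the minimality of $\sum_{v\in D}\dist(v,H)$. If $w\notin D$ then $|D'|=|D|$ and $\sum_{u\in D'}\dist(u,H)=\sum_{u\in D}\dist(u,H)-\dist(v,H)+\dist(w,H)=\sum_{u\in D}\dist(u,H)-1$, a strict decrease. If $w\in D$ then $D'=D\setminus\{v\}$ and $\sum_{u\in D'}\dist(u,H)=\sum_{u\in D}\dist(u,H)-\dist(v,H)<\sum_{u\in D}\dist(u,H)$, again a strict decrease. Either way this contradicts the choice of $D$, so $D\subseteq H$, completing the proof.

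I expect the only genuine subtlety to be the verification that $D'$ is a $k$-power dominating set: one must pin down the monotonicity of the $k$-forcing closure (more blue vertices initially yields at least as large a final blue set, since white degrees can only shrink), and observe that a low-degree vertex that has acquired even a single blue neighbor already has white degree at most $k$ and hence reinstates its entire closed neighborhood. Everything else is the standard extremal-argument bookkeeping, choosing the target set $H$ and moving along shortest paths toward it.
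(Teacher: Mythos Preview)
Your argument is correct. Note, however, that the paper does not itself prove this lemma---it is quoted from \cite{chang1} without proof---so there is no in-paper proof to compare against directly. That said, the paper proves a hypergraph analogue (Lemma~\ref{first}) by a related but genuinely different extremal device, and the comparison is worth making. Both arguments rest on the same swap observation: if $v\in D$ has $deg_w(v,N[D\setminus\{v\}])\le k$ (in your setting this is forced by $\deg(v)\le k+1$), then $(D\setminus\{v\})\cup\{u\}$ is again a $k$-power dominating set for any neighbor $u$ of $v$, because after the domination step $v$ is blue with at most $k$ white neighbors and one force restores all of $N[D]$. The difference lies in the potential function. You minimize $\sum_{v\in D}\dist(v,H)$ over all $k$-power dominating sets and slide each low-degree member one step toward the target set $H$; the paper's Lemma~\ref{first} instead minimizes the number of components of the subgraph induced on $D$ among \emph{minimum} $k$-power dominating sets and slides an offending vertex along a shortest path toward some other member of $D$. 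Your potential is more directly tailored to the conclusion and avoids any appeal to minimality of $|D|$; if one also wanted a \emph{minimum} $k$-power dominating set contained in $H$, it would suffice to restrict your minimization to minimum sets, since your swap never increases $|D|$.
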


In order to prove Conjecture \ref{conj} for $r\leq 4$, the following lemma is needed.

\begin{lem}\label{first}
Let $\mathcal{H}$ be a connected hypergraph
  with $k+3 \leq n$ vertices. There exists a minimum $k$-power dominating set $D$ such that $k + 1 \leq deg_w(v, N[D \backslash \{v\}])$ for all $v \in  D$.

\end{lem}

\begin{proof}

    Given a set of vertices $S$, a vertex $v\in S$ is \textit{attached to $S$} if $|N(v)\cap S|\geq 1.$ Let $D$ be a $k$-power dominating set such that the number of attached vertices in $D$ is maximized. Moreover, choose $D$ such that out of all $k$-power dominating sets with maximum number of attached vertices, it has the minimum number of vertices $v$ with property  $deg_w(v,N[D\setminus\{v\}])\leq k$. We will also assume that $|D| \geq2$ since $|D| = 1$ is a trivial case.

  First note, if $v \in D$ and
  $deg_w(v, N[ D\backslash \{ v\} ] ) \leq k$, then for any vertex
  $u \in N(v)$ the set $\left( D \backslash \{v\} \right) \cup \{u\}$
  is also a minimum $k$-power dominating set. To see this  let $u\in N(v)$. In the domination step of $\left( D \backslash \{v\} \right) \cup \{u\}$, the set $N[D\backslash\{v\}]$ becomes blue and $u$ colors $v$ blue. Furthermore, $v$ can color its neighbors blue since $deg_w(v, N[ D\backslash \{ v\} ] ) \leq k$. Thus $N[D]$ becomes blue and $\left( D \backslash \{v\} \right) \cup \{u\}$ is a minimum $k$-power dominating set.

  Now assume that $v \in D$ is a vertex with
  $deg_w(v, N[ D\backslash \{ v\} ]) \leq k$. If there exists a
  $u \in N(v) \cap D$, then $D \backslash \{v\}$, by the
  note in the previous paragraph, is a $k$-power dominating set. But this is a contradiction
  since $D$ is a minimum set. So assume $v$ has no neighbors in $D$, and note this means $v$ is not attached.

  Since $\mathcal{H}$ is connected, there exists a path
  $v = u_0, u_1, \dots, u_\ell= v'$ to any vertex $v'$ in $D$. Pick
  $v'$ so that $v'$ is a vertex in $D$ closest to $v$; this implies
  that $u_i$ is not in $D$ for all $i \in \{1, \dots, \ell-1\}$, and
  that no $u_i$ is adjacent to a vertex in $D\backslash \{v\}$, except
  for $u_{\ell-1}$.
  
  Let $D_i= D\setminus \{v\}\cup \{u_i\}$ for $1\leq i \leq \ell$, and note that since $u_{\ell} = v' \in D$, $$|D_{\ell}| = |D\setminus \{v\}\cup \{u_{\ell}\}| = | D\setminus \{v\}\cup \{v'\}| = |D| - 1.$$ Furthermore, notice that for each $i$, $0\leq i \leq \ell-1$, if $D_i$ is a minimum $k$-power dominating set and $deg_w(u_i, N[D_i \backslash\{u_i\}]) \leq k$, then $D_{i+1}$ is a minimum $k$-power dominating set. However, $D_{\ell}$ is not a $k$-power dominating set because $|D_{\ell}| < |D|$ by definition. Therefore, there has to exist a first $u_j$ on the path (starting from $v$) with $deg_w(u_j, N[D_j \backslash\{u_j\}]) \geq k+1$, and $deg_w(u_i, N[D_i \backslash\{u_i\}]) \leq k$ for all $i<j$. Observe that $D_j$ is a $k$-power dominating set.

 If $j = \ell-1$, then $D_j$ is a minimum $k$-power dominating set that contradicts the maximality condition of $D$, since $u_j$ is adjacent to $v'\in D_j$.
  
  If $j=\ell -2$ and $y\in D_j$ has the property that $deg_w(y, N[D_j \backslash\{y\}]) \leq k < deg_w(y, N[D \backslash\{y\}])$, then there exists a vertex $x$ adjacent to $u_j$ and $y$ (if not, the path we chose was not minimum). Let $D_x = (D_j\setminus \{y\})\cup x$. Then, $D_x$ is a $k$-power dominating set that contradicts the maximality property of $D$ since $x$ is adjacent to $u_j$. If no vertex $y$ has the property that $deg_w(y, N[D_j \backslash\{y\}]) \leq k < deg_w(y, N[D \backslash\{y\}])$, then $D_j$ contradicts the minimality property of $D$.

  If $j\leq\ell-3,$ then $u_j$ is not adjacent to any neighbor of a vertex in $D$, or any vertex in $D$. Now, $D_j$ has one less vertex $v$ with property $deg_w(v,N[D_j\setminus \{v\}])\leq k$ compared to $D$ and both have the same number of attached vertices, a contradiction.
  
 Thus, no vertex $v\in D$ has the property $deg_w(v, N[ D\backslash \{ v\} ]) \leq k$.



\end{proof}

Given a set $S$ in a hypergraph in $\mathcal{H}$, the set of {\it external private neighbors} of a vertex $v\in S$, denoted $epn(v,S),$ is the set of vertices not in $S$ that are adjacent to $v$, but not to any other vertex in $S$.

\begin{lem}\label{upper}
Let $\mathcal{H}$ be a hypergraph and let $D$ be a minimum $k$-power dominating set of $\mathcal{H}$ such that $k+1\leq deg_w(v,N[D\setminus \{v\}])$. Then $k+1\leq deg_w(v,N[D\setminus \{v\}]) \leq |epn(v,D)|,$ for all $v\in D$.
\end{lem}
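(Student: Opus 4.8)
The plan is to argue that every edge counted by $\deg_w(v, N[D\setminus\{v\}])$ must contribute at least one distinct external private neighbor of $v$, which immediately gives the inequality. First I would unpack the definition: $\deg_w(v, N[D\setminus\{v\}])$ is the minimum number of edges incident to $v$ needed to cover the white neighbors of $v$ when the blue set is $N[D\setminus\{v\}]$. Fix a minimum such cover $\mathcal{F} = \{e_1, \dots, e_t\}$ with $t = \deg_w(v, N[D\setminus\{v\}]) \ge k+1$. The key observation I want is that each $e_i$ contains a vertex $w_i$ that is white with respect to $N[D\setminus\{v\}]$ and that lies in no other edge of the cover $\mathcal{F}$ — otherwise $\mathcal{F}\setminus\{e_i\}$ would still cover all white neighbors of $v$ (every white neighbor of $v$ in $e_i$ would already be covered by some other edge), contradicting minimality of the cover. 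So the $w_i$ are $t$ distinct white neighbors of $v$.

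Next I would show each $w_i$ is an external private neighbor of $v$ with respect to $D$. Since $w_i$ is white with respect to $N[D\setminus\{v\}]$, it is not in $D\setminus\{v\}$ and not adjacent to any vertex of $D\setminus\{v\}$; and $w_i \ne v$ since $w_i$ is a (white) neighbor of $v$. Thus $w_i \notin D$, $w_i$ is adjacent to $v$, and $w_i$ is adjacent to no other vertex of $D$ — precisely the condition $w_i \in epn(v,D)$. Since the $w_i$ are distinct, $|epn(v,D)| \ge t = \deg_w(v, N[D\setminus\{v\}])$, as desired.

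The main point requiring care — the only real obstacle — is the distinctness of the witnesses $w_i$, i.e. showing the chosen cover can be taken so that each edge has a "private" white vertex not shared with the rest of the cover. This follows from minimality of the cover size: if $e_i$ had no such private white vertex, then every white neighbor of $v$ in $e_i$ would be covered by $\mathcal{F}\setminus\{e_i\}$, and since $\mathcal{F}$ covers all white neighbors of $v$ using the edges through $v$, dropping $e_i$ would still be a valid cover, contradicting $t$ being the minimum. One small subtlety to address explicitly: a white neighbor $w_i$ might lie in edges incident to $v$ that are not in the chosen cover $\mathcal{F}$; this is harmless, since the definition of $epn$ concerns adjacency (membership in a common edge with a vertex of $D$), and $w_i$ being white w.r.t. $N[D\setminus\{v\}]$ already guarantees it shares no edge with any vertex of $D\setminus\{v\}$. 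Finally, I would note the hypothesis $k+1 \le \deg_w(v, N[D\setminus\{v\}])$ is not actually needed for the inequality itself — the argument gives $\deg_w(v, N[D\setminus\{v\}]) \le |epn(v,D)|$ unconditionally — but it is the form in which the lemma will be applied together with Lemma~\ref{first}.
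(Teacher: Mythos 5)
Your proposal is correct and takes essentially the same route as the paper: the heart of both arguments is the identification that a neighbor of $v$ which is white with respect to $N[D\setminus\{v\}]$ is exactly an external private neighbor of $v$ with respect to $D$. The paper simply observes that these white neighbors are precisely $epn(v,D)$ and can therefore be covered by at most $|epn(v,D)|$ edges through $v$, while your minimum-cover/private-witness extraction is a more detailed rendering of the same counting (and your remark that the hypothesis $k+1\leq deg_w(v,N[D\setminus\{v\}])$ is not actually needed for the inequality is accurate).
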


\begin{proof}
Using Lemma \ref{first}, let $D$ be a minimum $k$-power dominating set of $\mathcal{H}$ such that $k+1\leq deg_w(v,N[D\setminus \{v\}])$ for all $v\in D$. Color $v$ and $N[D\setminus \{v\}]$ blue, for some $v\in D$. Then the external private neighborhood of $v$ is left white, which implies $deg_w(v,N[D\setminus \{v\}])\leq epn(v,D)$. 
\end{proof}

In \cite{BHT}, Bujt\'as, Henning, and Tuza proved the following upper bound for dominating sets in hypergraphs.

\begin{thm}[\cite{BHT}] \label{BHT1}
If $\mathcal{H}$ is an $r$-uniform hypergraph of order $n$ with $m$ edges and no isolated vertices, and $r\geq 3$, then $$\gamma(\mathcal{H})\leq \frac{n + \floor{\frac{r-3}{2}}m}{\floor{\frac{3(r-1)}{2}}}.$$
\end{thm}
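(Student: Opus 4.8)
The plan is to prove the equivalent inequality $b\,\gamma(\mathcal{H})\le n+am$, where $a=\lfloor\tfrac{r-3}{2}\rfloor$ and $b=\lfloor\tfrac{3(r-1)}{2}\rfloor$, by strong induction on $n+m$. The one identity I would record at the outset is $b-a=r$ (verify it separately for $r$ even and $r$ odd); reading it through the potential $\Phi(\mathcal{H}):=n+am$, it says that each vertex is worth $1$ and each edge is worth $b-r$, so one extra dominating vertex is permitted to consume exactly $b$ units of $\Phi$ — one whole edge's worth of $r$ vertices together with $a$ edge-weights — and never less. Since $\gamma$, $n$, $m$, and $\Phi$ are additive over connected components and every component contains an edge (no isolated vertices), it suffices to take $\mathcal{H}$ connected. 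The base case is the smallest connected such hypergraph, a single edge, where $\Phi=r+a=b$ and $\gamma=1$, so $b\gamma=b=\Phi$: the target is already tight here, which warns that the later case analysis must be exact rather than generous.

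For the inductive step I would look for a \emph{reducible configuration}: a vertex $v$ to put into the dominating set together with a way to pass to a smaller hypergraph $\mathcal{H}'$ so that (i) $\mathcal{H}'$ has no isolated vertex and only edges of admissible size, (ii) $v$ plus a dominating set of $\mathcal{H}'$ dominates $\mathcal{H}$, and (iii) $\Phi(\mathcal{H})-\Phi(\mathcal{H}')\ge b$; then $b\,\gamma(\mathcal{H})\le b(1+\gamma(\mathcal{H}'))\le b+\Phi(\mathcal{H}')\le\Phi(\mathcal{H})$. The model case is a leaf-edge component $e$: put any $v\in e$ into the dominating set and delete $e$ with its $r$ vertices, dropping $\Phi$ by exactly $r+a=b$ and reproducing the tight bound. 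Because a connected hypergraph with $m\ge2$ edges must have a vertex $v$ of degree $\ge2$, and then $N[v]$ has size at least $r+1$ (and $2r-1$ if $v$'s two edges meet only in $v$), there is in principle slack available. The real difficulty is that deleting all of $N[v]$ shrinks the edges that meet $N[v]$ partially and destroys $r$-uniformity, so one must instead delete $v$ together with a carefully chosen set — say one full edge through $v$ and $v$'s remaining private neighbours — and argue that the surviving short edges may be discarded or absorbed without overspending; when $r\ge5$ and $a\ge1$, paying for these residual edges out of the $a$-weight carried by the deleted edges is precisely how the $am$ term enters.

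The step I expect to be the main obstacle is twofold. First, the induction must be carried out on a class of hypergraphs closed under the deletions used — most plausibly ``every edge has size at least $3$'' (or at least $r$ only where uniformity is truly needed) rather than strictly $r$-uniform — and one must exhibit, for \emph{every} connected hypergraph in that class whose vertices all have degree $\ge2$, a reducible configuration losing at most $b$ units of $\Phi$ per dominating vertex; finding a single such configuration that also survives scrutiny at the extremal examples (disjoint edges being the simplest, alongside pendant/squid-type gadgets) is the crux, since there the inequality is tight and no slack remains. Second, there is a tempting shortcut via the closed-neighbourhood hypergraph $\mathcal{H}^{\ast}$ with edge set $\{N[v]:v\in V\}$, for which $\gamma(\mathcal{H})=\tau(\mathcal{H}^{\ast})$, every edge has size $\ge r$, and $|E(\mathcal{H}^{\ast})|\le n$; but feeding $m^{\ast}\le n$ into a Chv\'atal--McDiarmid-type transversal bound yields constants strictly weaker than $a=\lfloor\tfrac{r-3}{2}\rfloor$ and $b=\lfloor\tfrac{3(r-1)}{2}\rfloor$, so one would still have to exploit that the edges of $\mathcal{H}^{\ast}$ are closed neighbourhoods and are enlarged by vertices of degree $\ge2$ — making that route no lighter than the direct induction.
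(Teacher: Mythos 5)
This theorem is not proved in the paper at all: it is quoted from \cite{BHT}, so there is no internal argument to compare yours against, and a self-contained proof would have to reproduce, in substance, the weighting analysis of Bujt\'as, Henning and Tuza. Your framework is set up correctly --- the identity $\lfloor\tfrac{3(r-1)}{2}\rfloor-\lfloor\tfrac{r-3}{2}\rfloor=r$, the reduction to connected hypergraphs, and the tight single-edge base case are all fine --- but the proposal stops exactly where the proof has to begin. You never exhibit a reducible configuration: the claim that every connected hypergraph in the relevant class (beyond a leaf-edge component) contains a vertex whose removal, together with a suitable set of neighbours and edges, costs at most $b=\lfloor\tfrac{3(r-1)}{2}\rfloor$ units of $\Phi$ per dominating vertex, keeps the residual hypergraph inside the class, and preserves domination of the deleted part, is precisely the content of the theorem, and you explicitly flag it as an unresolved obstacle. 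Since the bound is tight already for a disjoint union of single edges, every case of that analysis must be exact, so the missing step cannot be waved through.

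Two of your specific fallback ideas also fail as stated. First, relaxing the class to ``every edge has size at least $3$'' is incompatible with the $r$-dependent constants: for $r\ge 5$ a component consisting of a single edge of size $3$ has $\Phi=3+\lfloor\tfrac{r-3}{2}\rfloor<\lfloor\tfrac{3(r-1)}{2}\rfloor$ while $\gamma=1$, so the inductive hypothesis is simply false on the shrunken hypergraphs your deletions create; repairing this forces the edge weight to depend on the current edge size, which is exactly the refined weighting scheme one would have to develop --- i.e., the actual BHT argument. Second, the plan to pay for residual short edges ``out of the $a$-weight carried by the deleted edges'' is vacuous when $r=3$ or $4$, since then $a=\lfloor\tfrac{r-3}{2}\rfloor=0$; these are precisely the cases (Corollary~\ref{BHT2}) on which the present paper relies. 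So what you have is a plausible strategy outline with correct bookkeeping at the boundary, but the core inductive/discharging content of the proof is absent.
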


Setting $r=3,$ or $4$, results in the following corollary.

\begin{cor}[\cite{BHT}] \label{BHT2}
For any $r$-uniform hypergraph $\mathcal{H}$ on $n$ vertices with no isolated vertices, with $r=3$ or $4$, $$\gamma(\mathcal{H}) \leq \frac{n}{r}.$$
\end{cor}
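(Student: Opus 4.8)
The plan is to obtain the corollary by simply substituting $r=3$ and $r=4$ into the bound of Theorem~\ref{BHT1} and evaluating the two floor functions. No new combinatorial argument is needed; the content is entirely in the previously stated theorem.

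First I would handle the coefficient of $m$ in the numerator. For $r=3$ we have $\floor{\frac{r-3}{2}}=\floor{0}=0$, and for $r=4$ we have $\floor{\frac{r-3}{2}}=\floor{\frac{1}{2}}=0$. Hence in both cases the term $\floor{\frac{r-3}{2}}m$ vanishes and the numerator of the bound in Theorem~\ref{BHT1} collapses to $n$, so the dependence on the number of edges $m$ disappears.

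Next I would evaluate the denominator $\floor{\frac{3(r-1)}{2}}$. For $r=3$ this is $\floor{\frac{6}{2}}=3=r$, and for $r=4$ this is $\floor{\frac{9}{2}}=\floor{4.5}=4=r$. Combining with the previous step, Theorem~\ref{BHT1} yields $\gamma(\mathcal{H})\le \frac{n}{r}$ for $r\in\{3,4\}$, which is exactly the claimed inequality. (One should note that the hypotheses of Theorem~\ref{BHT1}—that $\mathcal{H}$ is $r$-uniform of order $n$ with no isolated vertices and $r\ge 3$—are precisely the hypotheses assumed in the corollary, so the application is immediate.)

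There is no real obstacle here: the only thing to be careful about is computing $\floor{\frac{3(r-1)}{2}}$ correctly for the odd value $r=4$ (where the floor is nontrivial, giving $4$ rather than $4.5$), and confirming that $\floor{\frac{r-3}{2}}=0$ for both $r=3$ and $r=4$ so that the edge-count term drops out. Once these two small arithmetic checks are made, the corollary follows directly from Theorem~\ref{BHT1}.
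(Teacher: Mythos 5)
Your proposal is correct and is exactly the paper's approach: the paper derives the corollary simply by setting $r=3$ or $r=4$ in Theorem~\ref{BHT1}, where $\floor{\frac{r-3}{2}}=0$ kills the $m$-term and $\floor{\frac{3(r-1)}{2}}=r$ in both cases. (Only a cosmetic slip: $r=4$ is even, not odd; what makes the floor nontrivial there is that $3(r-1)=9$ is odd, but your computed value $4$ is right.)
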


With the Lemma \ref{first}, Lemma \ref{upper} and Theorem \ref{BHT1}, the following main result is proven.

\begin{thm}\label{winner}
If $\mathcal{H}$ is a connected, $r$-uniform hypergraph of order $n$, $k+r\leq n$, with $m$ edges, and $r\geq 3$, then $$\gamma_p^k(\mathcal{H}) \leq \frac{n + \floor{\frac{r-3}{2}}m}{\floor{\frac{3(r-1)}{2}}+k}.$$
\end{thm}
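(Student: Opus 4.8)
The plan is to reduce the $k$-power domination bound to the ordinary domination bound of Theorem~\ref{BHT1} by a counting argument, following the skeleton already set up by Lemmas~\ref{first} and~\ref{upper}. Let $D$ be a minimum $k$-power dominating set chosen, as in Lemma~\ref{first}, so that $\mathcal{H}[D]$ has the fewest components; then $k+1 \le deg_w(v, N[D\setminus\{v\}])$ for every $v \in D$, and by Lemma~\ref{upper} this quantity is at most $|epn(v,D)|$. The key observation is that the sets $epn(v,D)$ over $v \in D$ are pairwise disjoint (a vertex is an external private neighbor of at most one member of $D$) and are all disjoint from $D$ itself. Hence, writing $n = |V(\mathcal{H})|$ and $d = |D| = \gamma_p^k(\mathcal{H})$, we get
\[
n \ge |D| + \sum_{v \in D} |epn(v,D)| \ge d + \sum_{v\in D} deg_w(v, N[D\setminus\{v\}]) \ge d + (k+1)d = (k+2)d,
\]
which already recovers the weaker bound $\gamma_p^k(\mathcal{H}) \le n/(k+2)$. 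To get the sharper bound in the statement I would instead carve out, for each $v \in D$, a set $W_v$ of white vertices witnessing $deg_w(v, N[D\setminus\{v\}])$ and apply the domination-type estimate of Theorem~\ref{BHT1} to an auxiliary sub-hypergraph rather than counting vertices naively.

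More concretely, here is the approach I would carry out. For each $v \in D$, the value $t_v := deg_w(v, N[D\setminus\{v\}]) \le |epn(v,D)|$ is realized by a family of $t_v$ edges through $v$ covering all white neighbors of $v$; since the external private neighbors of $v$ are white and belong only to edges through $v$, we may assume these $t_v$ edges lie inside $\{v\} \cup epn(v,D)$. Now build a hypergraph $\mathcal{H}^\ast$ on the vertex set $\bigcup_{v\in D} \big(\{v\}\cup epn(v,D)\big)$ whose edges are, for each $v$, the $t_v$ chosen edges; this is an $r$-uniform hypergraph with no isolated vertices (every $v$ lies in its edges, and every external private neighbor was chosen to be covered). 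The set $D$ is a dominating set of $\mathcal{H}^\ast$ of size $d$, but more importantly, the minimality of $D$ as a $k$-power dominating set forces each $v$ to actually be needed; combining the per-vertex inequality $t_v \ge k+1$ with the global count of edges $m^\ast = \sum_v t_v$ of $\mathcal{H}^\ast$ and its vertex count $n^\ast \le n$, the Bujt\'as--Henning--Tuza bound applied to $\mathcal{H}^\ast$ gives $d = |D| \ge \gamma(\mathcal{H}^\ast)$ is the wrong direction, so instead I would run the counting directly: Theorem~\ref{BHT1}'s proof-philosophy is that $\lfloor \tfrac{3(r-1)}{2}\rfloor \, \gamma(\mathcal{H}) \le n + \lfloor\tfrac{r-3}{2}\rfloor m$; here we want a lower bound on $n$ in terms of $d$, which we obtain by noting $n \ge |D| + \sum_{v} |epn(v,D)|$ and estimating $|epn(v,D)|$ from below using both $t_v \ge k+1$ and the structural fact that $t_v$ edges covering the private neighbors, being $r$-uniform and overlapping only at $v$, contain at least $\lfloor \tfrac{3(r-1)}{2}\rfloor \cdot \tfrac{t_v - k}{\,?\,}$-many...

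Let me state the intended final step cleanly. The right way is: apply Theorem~\ref{BHT1} to $\mathcal{H}^\ast$ to bound $\gamma(\mathcal{H}^\ast)$ from above, observe $\gamma(\mathcal{H}^\ast) = d$ because $D$ is a minimum dominating set of $\mathcal{H}^\ast$ as well (each $v$ has a private neighbor in $\mathcal{H}^\ast$ so no proper subset dominates, and the $k$-power-domination minimality guarantees $D$ itself dominates nothing smaller), and then rearrange:
\[
d = \gamma(\mathcal{H}^\ast) \le \frac{n^\ast + \lfloor\frac{r-3}{2}\rfloor m^\ast}{\lfloor\frac{3(r-1)}{2}\rfloor} \le \frac{n + \lfloor\frac{r-3}{2}\rfloor m}{\lfloor\frac{3(r-1)}{2}\rfloor}.
\]
But this gives exactly the domination bound without the $+k$ in the denominator, so the final maneuver must exploit $t_v \ge k+1$ to improve the denominator. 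The clean fix: delete from $\mathcal{H}^\ast$, for each $v$, one of the chosen edges together with its private vertices after accounting, so that the surviving hypergraph still has each vertex of $D$ dominating $k$ further edges' worth of new vertices; equivalently, show $n \ge d(k+1) + (\text{something proportional to } m)$ and combine with the BHT inequality applied to the "extra" structure. The main obstacle, and the step I expect to require the most care, is precisely this bookkeeping: correctly setting up the auxiliary hypergraph (and possibly a weighting of its edges) so that the $k+1$ lower bound on $deg_w$ contributes exactly an additive $k$ to the denominator of the BHT bound rather than being absorbed suboptimally, and verifying that the no-isolated-vertices hypothesis of Theorem~\ref{BHT1} is met after these deletions. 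Once that auxiliary structure is pinned down, the inequality $\gamma_p^k(\mathcal{H}) = |D| \le \frac{n + \lfloor (r-3)/2\rfloor m}{\lfloor 3(r-1)/2\rfloor + k}$ follows by the rearrangement above.
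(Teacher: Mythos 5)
Your setup (Lemmas~\ref{first} and~\ref{upper}, disjointness of the sets $epn(v,D)$) matches the paper, but the proposal stops short of the theorem: you yourself note that applying Theorem~\ref{BHT1} to an auxiliary hypergraph $\mathcal{H}^\ast$ in which $D$ is a minimum dominating set can only yield $d \le \bigl(n^\ast + \lfloor\frac{r-3}{2}\rfloor m^\ast\bigr)/\lfloor\frac{3(r-1)}{2}\rfloor$, i.e.\ the bound \emph{without} the $+k$, and the ``clean fix'' that is supposed to convert $t_v \ge k+1$ into an additive $k$ in the denominator is left as an unresolved sketch. That conversion is precisely the content of the theorem, so this is a genuine gap rather than a routine bookkeeping step. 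There is also a local error: you cannot assume the $t_v$ witnessing edges lie inside $\{v\}\cup epn(v,D)$ --- an edge through $v$ covering private neighbors may also contain non-private common neighbors of $v$ and other vertices of $D$ --- so $\mathcal{H}^\ast$ as described need not be $r$-uniform on the stated vertex set, and the claim $\gamma(\mathcal{H}^\ast)=d$ does not in any case feed the $+k$ into the estimate.

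The paper's mechanism, which your proposal is missing, points the BHT bound in the opposite direction. Since $|epn(v,D)| \ge k+1$ for every $v\in D$, one deletes from $\mathcal{H}$ itself a set $P_v$ of $k$ external private neighbors of each $v\in D$ (so at least $k|D|$ vertices in total), and repairs the result so that Theorem~\ref{BHT1} applies: vertices isolated by the deletion are either discarded or re-covered by newly formed $r$-edges through $v$, in such a way that $B\setminus I$, where $B=N[D]\setminus P$, remains a $k$-forcing set of $\mathcal{H}$. The point is that \emph{any} dominating set of the reduced hypergraph $\mathcal{H}'$ dominates $B\setminus I$ and is therefore a $k$-power dominating set of $\mathcal{H}$; hence $|D| = \gamma_p^k(\mathcal{H}) \le \gamma(\mathcal{H}')$, while Theorem~\ref{BHT1} gives $\gamma(\mathcal{H}') \le \bigl(n - k|D| + \lfloor\frac{r-3}{2}\rfloor m\bigr)/\lfloor\frac{3(r-1)}{2}\rfloor$ since $m'\le m$. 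Rearranging (the paper phrases this as a contradiction to the minimality of $D$) produces exactly the $+k$ in the denominator. So the improvement comes not from $D$ dominating an auxiliary structure, but from the fact that a smaller dominating set of the pruned hypergraph would already $k$-power dominate $\mathcal{H}$; without this reversal your argument cannot get past the plain domination bound.
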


\begin{proof}

Let $\mathcal{H}$ be a connected, $r$-uniform hypergraph. By Lemma \ref{upper}, let $D$ be a minimum $k$-power dominating set such that for all $v\in D$, $k+1\leq deg_w(v,N[D\setminus\{v\}])\leq |epn(v,D)|$. 

For sake of contradiction, assume that $|V(\mathcal{H})| < \floor{\frac{3(r-1)}{2}}|D| - \floor{\frac{r-3}{2}}m +k|D|$. From the vertex set and edge set of $\mathcal{H}$, an auxiliary graph $\mathcal{H}'$ will be constructed by removing at least $k|D|$ vertices. This hypergraph will be constructed such that it will have a dominating set that dominates a $k$-forcing set of $\mathcal{H}$, which leads to a contradiction since this set will be a $k$-power dominating set in $\mathcal{H}$ that is smaller than $D$.

Let $\{v_1,v_2,\dots, v_{|D|}\}$ be an ordering of the vertices in $D$. For each $v_i$, let $P_{v_i}$ be a set of $k$ vertices from the external private neighborhood of $v_i$. Let $P$ be the union of each $P_{v_i}$ and let $B= N[D]\setminus P.$ Observe the $B$ is a $k$-forcing set for $\mathcal{H}$. 

To constuct $\mathcal{H}'$, each $P_{v_i}$ will be iteratively removed. First consider $\mathcal{H}\setminus P_{v_1}$. This hypergraph may have isolated vertices since removing vertices from $\mathcal{H}$ also removes edges with $r-1$ or fewer vertices. Let $I_1$ be the set of vertices that become isolated after removing $P_{v_1}$. For every vertex $x\in I_1$, pick exactly one vertex from $P_{v_1}$ that is in an edge with $x$ and $v_1$ in $\mathcal{H}$. Call this set of chosen vertices $T_1$. Let $|T_1|=t.$


In $\mathcal{H}$, if $deg_w(v_1,B\setminus I_{1})\leq k$, let $I_1=I_{v_1}$ and let $H_1=\mathcal{H}\setminus P_{v_1}$. If $deg_w(v_1,B\setminus I_{1})\geq k+1$, the following process is done on $\mathcal{H}\setminus P_{v_1}$ to construct $H_1$. Since $deg_w(v_1,B\setminus I_{1})\geq k+1$, then $deg_w(v_1,N[D]\setminus (I_1 \cup T_1))\geq t+1$. Since $deg_w(v_1,N[D]\setminus (I_1 \cup T_1))\geq t+1$, there exists at least $t+1$ vertices in $I_1$, each in a distinct edge containing $v_1$ and some vertex from $T_1$ in $\mathcal{H}$. Let $A_1$ be the set of these vertices in $I_1$ and let $A_2$ be the set of distinct edges containing these vertices. 

Let $x \in A_1$, $e’ \in A_2$ with $x \in e’$, and $C\subseteq A_1\setminus\{x\}$ with $|C|=|e'\cap T_1|$, then construct the new edge  $e= \{v_1, e'\setminus(e'\cap T_1),  C\}$ in $\mathcal{H}\setminus P_{v_1}$ and note that this edge contains $r$ vertices. Iteratively do this for every vertex in $A_1$. Once this process finishes, every vertex in $I_1$ is contained in one of the newly added edges. In this case, let $I_{v_1}= \{\emptyset\}$. Hence, in $\mathcal{H}$, $deg_w(v_1,B\setminus I_{v_1})\leq k$.  Let $H_1$ be the hypergraph constructed after this process is done on $\mathcal{H}\setminus P_{v_1}$.


The same process is then done on $H_1\setminus P_{v_2}$ to get $H_2$ and continued until $H_{|D|}$ is constructed. Let $I = \bigcup I_{v_i}$ and $\mathcal{H}' = H_{|D|}\setminus I.$ Let $m'$ be the number of edges in $\mathcal{H}'$ and note that $m'\leq m$. Observe that by construction $B\setminus I$ is a $k$-forcing set for $\mathcal{H}$.

Through this construction $|V(\mathcal{H}')| \leq |V(\mathcal{H})|-k|D| < \floor{\frac{3(r-1)}{2}}|D| - \floor{\frac{r-3}{2}}m$ therefore by Theorem \ref{BHT1}, 

\begin{align*}
    \gamma(\mathcal{H}') &< \frac{\floor{\frac{3(r-1)}{2}}|D| - \floor{\frac{r-3}{2}}m+\floor{\frac{r-3}{2}}m'}{\floor{\frac{3(r-1)}{2}}} \\
    &\leq  \frac{\floor{\frac{3(r-1)}{2}}|D| - \floor{\frac{r-3}{2}}m+\floor{\frac{r-3}{2}}m}{\floor{\frac{3(r-1)}{2}}} \\
    &= |D|.
\end{align*}

This is a contradiction since a minimum dominating set of $\mathcal{H}'$ dominates $B\setminus I$ in $\mathcal{H}$. Therefore, $$\gamma_p^k(\mathcal{H}) \leq \frac{n + \floor{\frac{r-3}{2}}m}{\floor{\frac{3(r-1)}{2}}+k}.$$

\end{proof}

Setting $r=3,$ or $4$, results in the following corollary.

\begin{cor}
For any connected $r$-uniform hypergraph $\mathcal{H}$ of order $n$, $k+r\leq n$, with no isolated vertices, with $r=3$ or $4$, $$\gamma_p^k(\mathcal{H}) \leq \frac{n}{r+k}.$$
\end{cor}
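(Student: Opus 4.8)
The claimed statement is the corollary obtained by specializing Theorem~\ref{winner} to $r=3$ and $r=4$, so the plan is to plug these two values of $r$ into the bound $\gamma_p^k(\mathcal{H}) \leq \frac{n + \floor{\frac{r-3}{2}}m}{\floor{\frac{3(r-1)}{2}}+k}$ and simplify. First I would handle $r=3$: here $\floor{\frac{r-3}{2}} = \floor{0} = 0$, so the term involving the number of edges $m$ vanishes entirely, and $\floor{\frac{3(r-1)}{2}} = \floor{3} = 3 = r$, giving $\gamma_p^k(\mathcal{H}) \leq \frac{n}{3+k} = \frac{n}{r+k}$ directly. Next I would handle $r=4$: again $\floor{\frac{r-3}{2}} = \floor{\frac{1}{2}} = 0$, so the $m$-term again disappears, and $\floor{\frac{3(r-1)}{2}} = \floor{\frac{9}{2}} = 4 = r$, so once more $\gamma_p^k(\mathcal{H}) \leq \frac{n}{4+k} = \frac{n}{r+k}$.

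The only hypotheses I need to check transfer are that $\mathcal{H}$ is connected, $r$-uniform, of order $n$ with $k+r \leq n$, and $r \in \{3,4\}$ — all of which are exactly the hypotheses of Theorem~\ref{winner} for those values of $r$; the ``no isolated vertices'' condition in the corollary's statement is in fact implied by connectedness for $n \geq 2$ (and the case $n = 1$ is vacuous since $k + r \leq n$ forces $n$ large). So the argument is genuinely just the two floor-function evaluations above, applied to the conclusion of the already-proven Theorem~\ref{winner}; there is no substantive obstacle, as the work was done in proving that theorem (and, underlying it, in Corollary~\ref{BHT2} of Bujt\'as--Henning--Tuza, which is the $r \in \{3,4\}$ specialization of the domination bound that feeds into the proof of Theorem~\ref{winner}).

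If one wanted a self-contained write-up rather than a one-line appeal, I would note that the two key arithmetic facts are $\floor{\frac{r-3}{2}} = 0$ for $r \in \{3,4\}$ and $\floor{\frac{3(r-1)}{2}} = r$ for $r \in \{3,4\}$, verify each by direct computation, and then substitute. The ``main obstacle,'' such as it is, is purely bookkeeping: making sure the floor of $\frac{3(r-1)}{2}$ really does collapse to $r$ in both cases (it does: $r=3$ gives $3$, $r=4$ gives $\floor{4.5}=4$), so that the denominator becomes $r+k$ and the numerator becomes $n$. No further case analysis or new lemma is required.
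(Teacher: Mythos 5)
Your proposal is correct and matches the paper's approach exactly: the paper obtains this corollary by simply setting $r=3$ or $4$ in Theorem~\ref{winner}, which is precisely your computation that $\floor{\frac{r-3}{2}}=0$ and $\floor{\frac{3(r-1)}{2}}=r$ for these values, collapsing the bound to $\frac{n}{r+k}$.
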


The squid hypergraphs show that the bound in Theorem \ref{winner} is tight for $r=3$ and $r=4$ by Lemma \ref{sizeofsquids}. 

The following construction from \cite{BHT} is used to show for any $r\geq 5$ there exists a $k$ such that $\frac{n + \floor{\frac{r-3}{2}}m}{\floor{\frac{3(r-1)}{2}}+k} = \gamma_p^k(\mathcal{H)}$. Let the hypergraph $\mathcal{H}$ be defined as follows. The vertex set of $\mathcal{H}$ is partitioned as $V=V_1\cup V_2 \cup V_3 \cup \{v_{12},v_{13},v_{23}\}$ where $|V_1|=\floor{\frac{r-1}{2}}$ and $|V_2|=|V_3|=\ceil{\frac{r-1}{2}}$. Let $V'_2$ be a subset of $V_2$ of size $\floor{\frac{r-1}{2}}$. Let the edge set of $\mathcal{H}$ consist of the edges $e_1= V_1\cup V_2 \cup \{v_{12}\}$, $e_2= V_1\cup V_3 \cup \{v_{13}\}$, and $e_3= V'_2\cup V_3 \cup \{v_{23}\}.$

Notice that with this construction, $\gamma_p^k(\mathcal{H)} = 1$ for $k\geq 1$ since in the domination step, the white neighbors of any blue vertex will be contained in a single edge. Setting $k= \floor{\frac{3(r-1)}{2}}$, 

    $$\frac{n + \floor{\frac{r-3}{2}}m}{\floor{\frac{3(r-1)}{2}}+k} = 1 = \gamma_p^k(\mathcal{H)}.$$

\section{Counterexamples}\label{four}

Although Conjecture \ref{conj} holds for $r\leq 4$, it is not true for $r\geq 7.$ A counterexample is now given for $r\geq7$. These hypergraphs are related to the hypergraphs constructed in \cite{BHT} that show the domination number upper bound is tight. Let $X=\{x_1,x_2,\dots,x_{k+1}\}$, $Y=\{y_1,y_2,\dots,y_{k+1}\}$, and $Z=\{z_1,z_2,\dots,z_{k+1}\}$, let $A_1,$ $A_2,$ and $A_3$ be sets of $k+2$ vertices, and let $B_1$ and $B_2$ be sets of $\ell$ vertices, $\ell\geq 0$. Let $\mathcal{H}$ be the hypergraph constructed with the following edge sets: 
$$A_1 \cup B_1 \cup \{x_i\} \cup A_2, \text{ } A_2 \cup B_1 \cup \{z_i\} \cup A_3 \text{ and } A_1 \cup B_2 \cup \{y_i\} \cup A_3, \text{ } 1 \leq i \leq k+1.$$ 

$\mathcal{H}$ is a $(5+2k+\ell)$-uniform hypergraph with $9+6k+2\ell$ vertices. If the conjecture is true, then $$\gamma_p^k(\mathcal{H}) \leq \frac{9+6k+2\ell}{5+3k+\ell} < 2.$$ Therefore $\gamma_p^k(\mathcal{H})=1.$ Suppose one vertex $v$ is chosen as a $k$-power dominating set. Then after the domination step, every vertex in $N(v)$ has white degree $k + 1$, and so $\{v\}$ is not a $k$-power dominating set.
 Therefore, $\gamma_p^k(\mathcal{H}) \geq 2$ which contradicts the conjecture.

In \cite{chang2}, Chang and Roussel conjectured that the only connected $r$-uniform hypergraphs, $\mathcal{H}$, where $\gamma_p^k(\mathcal{H}) = \frac{n}{r+k}$, were the squid hypergraphs. We will show that there are many other hypergraphs by extending the definition of squid hypergraphs.

 Given integers $d\geq 1,$ $k\geq1$ and $r\geq 2$ and a vector of positive integers $x=(x_1,\dots,x_d)$ where $1\leq x_i\leq r-1,$ construct a $(d,k,r,x)$-squid as follows. Let $$s_{1,1},\dots,s_{1,r-x_1},s_{2,1},\dots, s_{2,r-x_2},\dots,s_{d,1},\dots, s_{d,r-x_d}$$ be called the \textit{strong vertices} and let $$w_{1,1},\dots,w_{1,k+x_1},w_{2,1},\dots, w_{2,k+x_2},\dots,w_{d,1},\dots, w_{d,k+x_d}$$ be called the \textit{weak vertices}. Call the set of all vertices with first index $i$, the \textit{$i$-th spine}. Within each spine, add $k+1$ edges containing $r$ vertices between the strong vertices and weak vertices such that no subset of them cover the weak vertices in the spine. These edges will contain every vertex in the strong portion of the spine. Finally, add any edges as desired among the strong vertices (even amongst different spines), so long as the hypergraph is connected and $r$-uniform.

\begin{prop}
Let $\mathcal{H}$ be a $(d,k,r,x)$-squid with $d(r+k)=|V(\mathcal{H})|$, then $\gamma_p^k(\mathcal{H})=d$.
\end{prop}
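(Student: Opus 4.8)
The plan is to prove the two inequalities $\gamma_p^k(\mathcal{H})\le d$ and $\gamma_p^k(\mathcal{H})\ge d$ separately, using only the local structure of a single spine; connectivity of $\mathcal{H}$ and the freedom in choosing the extra edges among strong vertices will play no role.

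For the upper bound I would exhibit an explicit $k$-power dominating set of size $d$, namely $D=\{s_{1,1},s_{2,1},\dots,s_{d,1}\}$, one strong vertex from each spine (this is legitimate since $1\le x_i\le r-1$ forces the strong portion of each spine to be nonempty). Each of the $k+1$ edges inside the $i$-th spine contains every strong vertex of that spine, so $s_{i,1}$ lies in all $k+1$ of them; hence $N[s_{i,1}]$ contains the union of those $k+1$ edges, which by construction covers every weak vertex of the $i$-th spine. Thus $N[s_{i,1}]$ is the entire $i$-th spine, and taking the union over $i$ gives $N[D]=V(\mathcal{H})$. So $D$ is already a dominating set, hence a $k$-power dominating set, and $\gamma_p^k(\mathcal{H})\le d$.

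For the lower bound the key claim I would establish is that every $k$-power dominating set $D$ intersects each of the $d$ pairwise disjoint spines, which immediately gives $|D|\ge d$. Suppose for contradiction that $D$ avoids the $i$-th spine. Since the only edges through a weak vertex of that spine are its $k+1$ spine edges, and those edges (like the spine itself) lie entirely within the $i$-th spine, no weak vertex of the $i$-th spine is in $N[D]$, so they all begin white. Now look at the first step of the forcing process at which some weak vertex of the $i$-th spine turns blue. Immediately before that step all weak vertices of the spine are still white, so the forcing vertex $v$ must be a strong vertex of the $i$-th spine (it shares a spine edge with some weak vertex, and a white vertex cannot force). But the white neighbors of $v$ then include all $k+x_i$ weak vertices of the spine, and every edge through $v$ that meets these is one of the $k+1$ spine edges; since no proper subfamily of those $k+1$ edges covers all the weak vertices, $deg_w(v,S)\ge k+1>k$ for the current blue set $S$, so $v$ cannot force---a contradiction. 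Hence the weak vertices of the $i$-th spine never become blue and $D$ is not a $k$-power dominating set; this proves the claim, and therefore $\gamma_p^k(\mathcal{H})\ge d$.

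The step I expect to be the main obstacle is the white-degree estimate inside the lower-bound argument: one must verify carefully that the only edges available to a blue strong vertex $v$ for covering a still-white weak vertex are the designated $k+1$ spine edges, and then invoke the defining property of a squid---that no proper subfamily of those $k+1$ edges covers the weak portion of the spine---to conclude $deg_w(v,S)\ge k+1$. It is also worth stating explicitly which half of the squid hypothesis is used where: the full family of $k+1$ spine edges covering all weak vertices of a spine (so no weak vertex is isolated) is what drives the upper bound, while no $k$ of them covering all weak vertices of the spine is what drives the lower bound.
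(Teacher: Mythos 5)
Your proof is correct and follows essentially the same route as the paper's: the upper bound by observing that one strong vertex per spine dominates everything (since the $k+1$ spine edges contain all strong vertices and cover the weak vertices), and the lower bound by showing a $k$-power dominating set missing a spine can never color that spine's weak vertices, because any would-be forcing vertex is a strong vertex of that spine whose white weak neighbors require all $k+1$ spine edges to cover, so its white degree exceeds $k$. Your version simply spells out the details (the explicit set, the ``first force'' argument) more carefully than the paper does.
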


\begin{proof}
To show that $\gamma_p^k(\mathcal{H})=d$, observe that any set that intersects the strong portion of each spine is a dominating set and hence a $k$-power dominating set. It suffices to show that any $k$-power dominating set must necessarily have one vertex within each spine. Suppose to the contrary that there is a $k$-power dominating set that does not intersect one of the spines. Since edges containing weak vertices must be contained with that spine, in order to color the weak vertices within that spine, one of the corresponding strong vertices must force. However, observe that since the spine requires $k+1$ edges to cover its weak vertices, no strong vertex can force. This completes the proof.
\end{proof}

\section{Conclusion}

We have shown that Conjecture \ref{conj} does not hold for $r\geq 7$, and have shown a new upper bound for the $k$-power domination number that is related to the upper bound of the domination number given in \cite{BHT}. The following questions still remain.

\begin{quest}
Does Conjecture \ref{conj} hold for $r=5$ or $6$?
\end{quest}

It is interesting to note that if $\frac{n}{r+k}$ is replaced with $\ceil{\frac{n}{r+k}}$ in Conjecture \ref{conj}, then that equation would hold for the counterexamples given in the previous section.

\begin{quest}
Is Theorem \ref{winner} tight for every $k$ and $r$?
\end{quest}

\section*{Acknowledgements}

We would like to thank the American Institute of Mathematics Zero forcing and its applications 2017 workshop. The second author was partially supported by the ONR with grant number N00014-19-W-X00094, and the NSF with grant number DMS-1719894. The third author was partially supported by the NSERC Discovery Research Grant with grant number RGPIN-03852-2018. The fourth author was partially supported by the NSF with award number 1719841.


\begin{thebibliography}{100}


\bibitem{AIM} AIM Minimum Rank -- Special Graphs Work Group (F. Barioli, W. Barrett, S. Butler, S. M. Cioab\u{a}, D. Cvetkovi\'c, S. M. Fallat, C. Godsil, W. Haemers, L. Hogben, R. Mikkelson, S. Narayan, O. Pryporova, I. Sciriha, W. So, D. Stevanovi\'c, H. van der Holst, K. Vander Meulen, A. Wehe). Zero forcing sets and the minimum rank of graphs. Linear Algebra and its Applications., 428 (2008), 1628--1648.


\bibitem{BHT} C. Bujt\'as, M. Henning, Z. Tuza. Transversals and domination in uniform hypergraphs. European Journal of Combinatorics, 33 (2012), 62--72.

    
\bibitem{chang1} G.J. Chang, P. Dorbec, M. Montassier, A. Raspaud. (2012). Generalized power domination in graphs. Discrete Applied Mathematics. 160. 1691-1698.


\bibitem{chang2} G.J. Chang, N. Roussel. (2015). On the $k$-power domination of hypergraphs. Journal of Combinatorial Optimization. 30. 10.1007/s10878-013-9688-7.

\bibitem{haynes}  T.W. Haynes, S.M. Hedetniemi, S.T. Hedetniemi, M.A. Henning. (2002). Domination in Graphs Applied to Electric Power Networks. SIAM Journal on Discrete Mathematics. 15. 519-529. 10.1137/S0895480100375831.



\end{thebibliography}
\end{document}